\newtheorem{thm}{Theorem}
\newtheorem{lem}[thm]{Lemma}
\newtheorem{prop}[thm]{Proposition}
\newtheorem*{ex}{Example}
\theoremstyle{nonumberplain}
\newtheorem{proof}{Proof}
\renewcommand{\P}{$\mathcal{P}$ }
\newcommand{\Q}{$\mathcal{Q}$ }
\newcommand{\mathP}{\mathcal{P}}
\newcommand{\mathQ}{\mathcal{Q}}
\newcommand{\minP}[1]{$0_\mathcal{#1}$}
\newcommand{\maxP}[1]{$1_\mathcal{#1}$}
\title{A recursive description of automorphism groups of inductively constructed polytopes}
\author{Lara Be{\ss}mann \thanks{Funded as project KR1668/10 by the Deutsche Forschungsgemeinschaft, and under Germany's Excellence Strategy EXC 2044-390685587, Mathematics M\"unster: Dynamics-Geometry-Structure.}}
\begin{document}
\maketitle 
\begin{abstract}
Polytopes are ubiquitous in different areas of mathematics. 
Gleason and Hubard established a factorisation theorem, stating that every abstract polytope has a unique factorisation into prime polytopes. 
We compute the automorphism group of a certain family of inductively constructed polytopes using the unique factorisation.  
\end{abstract}

In various fields of mathematics, polytopes like prisms and pyramids occur. 
For example in numerical mathematics they are used for tessellations of spaces.
For this, polytopes in certain dimensions are needed and there are different ways to construct them. 
We study an inductively generated family of polytopes, consisting of prisms and pyramids such that in the $n$-th step $2^n$ polytopes are constructed. 
One question is then if the automorphism groups of these inductively constructed polytopes can be computed. 

An abstract polytope is a partially ordered set (poset). 
Hence, to understand the structure and possibly find a factorisation of abstract polytopes it is necessary to understand first posets.
Moreover, to study the automorphism group of an abstract polytope it is helpful to understand the automorphisms of the posets they are based on. 
Posets and their automorphism groups have been studied, among others, by Hashimoto in \cite{Hashimoto} and Duffus in \cite{Duffus}.
Their work is extended by Gleason and Hubard to abstract polytopes.

Hashimoto's Refinement theorem \cite[Theorem 1]{Hashimoto} states that every connected poset has a unique factorisation into irreducible posets.  
Gleason and Hubard used this in \cite{GleasonHubard} to obtain unique prime factorisations for every abstract polytope.  
We concentrate on the Cartesian product $\times$ and the join $*$.
The work of Gleason and Hubard leads to two unique prime factorisations for every abstract polytope, one with respect to the Cartesian product and one with respect to the join. 

Furthermore, Duffus studied the automorphism groups of posets and showed that every automorphism of a product of relatively prime posets is the product of automorphisms of each factor.   
Gleason and Hubard connected the prime factorisation using the result by Duffus to describe the automorphism groups of a product of polytopes as the product of the automorphism groups of the factors. 

Using the prime factorisation and the splitting of the automorphism groups of abstract polytopes obtained by Gleason and Hubard we compute the automorphism groups of the following inductively constructed family of polytopes. (See Figure \ref{GenProcess} for an illustration.)
\begin{quote}
Starting with an edge, in each step, from every polytope two new polytopes are constructed by taking the join with a point and the Cartesian product with an edge.
\end{quote}
\begin{figure}[htb] \centering 
\begin{tikzpicture}[point/.style={circle,draw,fill=black,inner sep=0.5pt},scale=0.7]

\draw (-6,-0.5) node[point](v1){} -- (-6,0.5) node[point](v2){};

\draw (0,3.5) node[point](v3){} -- (0,4.5) node[point](v4){} -- (1,4.5) node[point](v5){} -- (1,3.5) node[point](v6){} -- cycle;

\draw (0,-3.5) node[point](v7){} -- (0,-4.5) node[point](v8){} -- (0.75,-4) node[point](v9){} -- cycle;

\draw (6,5) node[point](v10){} -- (6,6) node[point](v11){} -- (7,6) node[point] (v12){} -- (7,5) node[point](v13){} -- (6,5) -- (6.5,5.5) node[point] (v14){} -- (6.5,6.5) node[point](v15){} -- (7.5,6.5) node[point] (v16) {} -- (7.5,5.5) node[point](v17) {} -- (6.5,5.5) -- cycle;
\draw (6,6) -- (6.5,6.5);
\draw (7,6) -- (7.5,6.5);
\draw (7,5) -- (7.5,5.5);

\draw (6,1.5) node[point](v18){} -- (6.5,2) node[point](v19){} -- (7.5,2) node[point](v20){} -- (7,1.5) node[point](v21){} -- cycle;
\draw (6,1.5) -- (6.7,3) node[point](v22){};
\draw (7,1.5) -- (6.7,3);
\draw (6.5,2) -- (6.7,3);
\draw (7.5,2) -- (6.7,3);

\draw (6,-3) node[point](v23){} -- (7,-3) node[point](v24){} -- (6.5,-2.25) node[point](v25){} -- cycle;
\draw (6.5,-2.5) node[point](v26){} -- (7.5,-2.5) node[point](v27){} -- (7,-1.75) node[point](v28){} -- cycle;
\draw (6,-3) -- (6.5,-2.5);
\draw (7,-3) -- (7.5,-2.5);
\draw (6.5,-2.25) -- (7,-1.75);

\draw (6.2,-5.5) node[point](v29){} -- (6.2,-6.5) node[point](v30){} -- (7.2,-6) node[point](v31){} -- cycle;
\draw (6.2,-5.5) -- (7.2, -5);
\draw (6.2,-6.5) -- (7.2, -5);
\draw (7.2,-6) -- (7.2, -5);

\draw[->] (-5.5,0.5) -- (-0.5,3.6); 
\draw[->] (-5.5,-0.5) -- (-0.5,-3.6); 
\draw[->] (1.5,4.3) -- (5.5,5.4); 
\draw[->] (1.5,3.7) -- (5.5,2.6); 
\draw[->] (1.5,-3.7) -- (5.5,-2.6); 
\draw[->] (1.5,-4.3) -- (5.5,-5.4); 
\draw[->] (8,6) -- (10,6.5); 
\draw[->] (8,5.5) -- (10,5); 
\draw[->] (8,2.5) -- (10,3); 
\draw[->] (8,2) -- (10,1.5); 
\draw[->] (8,-2.125) -- (10,-1.625); 
\draw[->] (8,-2.625) -- (10,-3.125); 
\draw[->] (8,-5.5) -- (10,-5); 
\draw[->] (8,-6) -- (10,-6.5); 

\draw (-3.5,2.25) node{\small $\times$} [align=center];
\draw (-3.5,-2.25) node{\small $*$} [align=center];
\draw (3.5,5.25) node{\small $\times$} [align=center];
\draw (3.5,2.75) node{\small $*$} [align=center];
\draw (3.5,-2.75) node{\small $\times$} [align=center];
\draw (3.5,-5.25) node{\small $*$} [align=center];
\draw (9,6.5) node{\tiny $\times$} [align=center];
\draw (9,5) node{\tiny $*$} [align=center];
\draw (9,3) node{\tiny $\times$} [align=center];
\draw (9,1.5) node{\tiny $*$} [align=center];
\draw (9,-1.625) node{\tiny $\times$} [align=center];
\draw (9,-3.125) node{\tiny $*$} [align=center];
\draw (9,-5) node{\tiny $\times$} [align=center];
\draw (9,-6.5) node{\tiny $*$} [align=center];

\draw (11,6.5) node{$ \dots $} [align=center];
\draw (11,5) node{$ \dots $} [align=center];
\draw (11,3) node{$ \dots $} [align=center];
\draw (11,1.5) node{$ \dots $} [align=center];
\draw (11,-1.625) node{$ \dots $} [align=center];
\draw (11,-3.125) node[align=center]{$\dots$};
\draw (11,-5) node{$ \dots $} [align=center];
\draw (11,-6.5) node{$ \dots $} [align=center];
\end{tikzpicture}
\caption{The construction process of the regarded polytopes.} \label{GenProcess}
\end{figure}
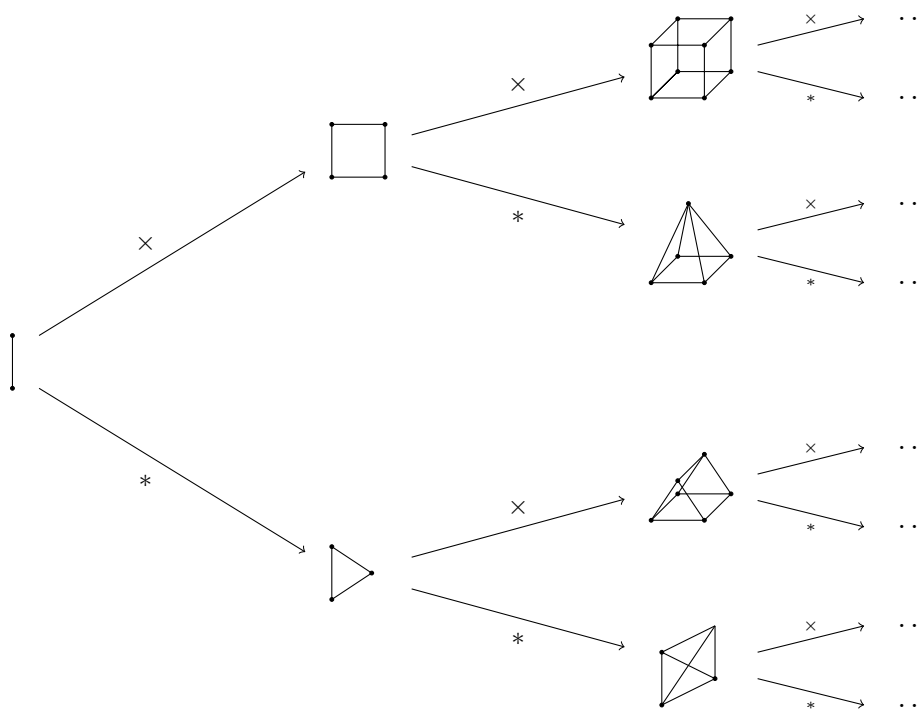

First, we are going to recall the relevant definitions and results. 
Then we prove that a prism is not a pyramid and a pyramid is not a prism. 
This is crucial to describe later the automorphism groups in terms of the previous polytope. 
Finally, we compute the automorphism groups. 
These automorphism groups can be written in terms of direct and semi-direct products of symmetric groups.
At the end, we give an algorithm attaching to each polytope the data that is necessary to compute its automorphism group.

\section{Abstract polytopes}
For the definition of abstract polytopes we follow Section 2A in \cite{McMullenSchulte}.
The notion of abstract polytopes generalise convex polytopes and every convex polytope in $\mathbb{R}^d$ is an abstract polytope (cf. \cite[Section 2A]{McMullenSchulte}). 

An \textit{abstract polytope} \P of rank $n$ is a partially ordered set (poset) with the following properties.
\begin{itemize}
\item It has a minimal element \minP{P} and a maximal element \maxP{P}. 
\item Every maximal totally ordered subset contains exactly $n+2$ elements including \minP{P} and \maxP{P}. This leads to a rank function $rank_{\mathP} \colon \mathP \to \{{-1},0,1, \dots, n \}$, the minimal element \minP{P} has rank ${-1}$ and the maximal element \maxP{P} has rank $n$.
An element of rank 0 is called a \textit{vertex} and an element of rank 1 is called an \textit{edge}.
\item A \textit{section} for two elements $F \leq G$ in \P is defined as
$ G/F = \left\{ H \mid F \leq H \leq G \right\}$.  

If either $rank(G)-rank(F) \leq 2$ or if for any two elements $F',G' \in G/F$ different from $F$ and $G$, there is a sequence of elements
\begin{align*}
F' = H^0, H^1, H^2, \dots, H^k=G'
\end{align*}
such that $H^i \in G/F$ for all $i=0, \dots, k$ and $H^i \leq H^{i+1}$ holds for every $i=0, \dots, k-1$, then the section $G/F$ is called \textit{connected}.

Every section in $\mathP$, including $\mathP = 1_{\mathP}/0_{\mathP}$ itself, is connected.
\item For incident elements $F \leq G$ with $rank(G)-rank(F)=2$ there are precisely two elements $H_1 \neq H_2$ with $F < H_1,H_2 < G$. This is the \textit{diamond condition}.
\end{itemize}

The polytopes $pt$ of rank $0$ and $I$ of rank $1$ are unique up to isomorphism.

\subsubsection*{Products of abstract polytopes}
The polytopes we are interested in are constructed using joins and Cartesian products.
Gleason and Hubard prove in section 4 in \cite{GleasonHubard} that joins and Cartesian products of abstract polytopes are again abstract polytopes.
We recall the definition of the join $*$ and the Cartesian product $\times$. 
To avoid repetition, let $\square \in \{*, \times\}$ either represent the join or the Cartesian product.

Let \P and \Q be abstract polytopes. The \textit{join} $\mathP * \mathQ$ and the \textit{Cartesian product} $\mathP \times \mathQ$  are defined as the following sets
\begin{align*}
\mathP * \mathQ &= \left\{ (F,G) \mid F \in \mathP, G \in \mathQ \right\} \\
\mathP \times \mathQ &= \left\{ (F,G) \mid F \in \mathP, G \in \mathQ, rank_{\mathP}(F), rank_{\mathQ}(G) \geq 0  \right\} \cup \left\{ (0_{\mathP}, 0_{\mathQ}) \right\}
\end{align*}
together with the following ordering
\begin{align*}
(F,G) \leq_{\mathP \square \mathQ} (F',G') \text{ if and only if } F \leq_{\mathP} F' \text{ and } G \leq_{\mathQ} G'.
\end{align*}
The rank of $(F,G) \in \mathP * \mathQ$ is given by
\begin{align*}
rank_{\mathP * \mathQ}(F,G) = rank_{\mathP}(F) + rank_{\mathQ}(G)+1. 
\end{align*}
The join of two abstract polytopes is an abstract polytope by \cite[Prop. 4.2]{GleasonHubard} and its rank is the sum of the ranks plus 1.

The rank of $(F,G)$ in $\mathP \times \mathQ$ with $rank_{\mathP}(F),rank_{\mathQ}(G) \geq 0$ is 
\begin{align*}
rank_{\mathP \times \mathQ}(F,G)= rank_{\mathP}(F) + rank_{\mathQ}(G)
\end{align*}
and the rank of $(0_{\mathP},0_{\mathQ})$ is -1. 
The Cartesian product of two abstract polytopes is an abstract polytope by \cite[Prop. 4.3]{GleasonHubard}. 
The rank of the product is the sum of the ranks. 

The polytope $\mathP^{\square k} = \mathP \square \dots \square \mathP$ for a natural number $k$, is the $k$-fold product of \P with itself. 
However, the polytope $pt^{* k}$ is a $k$-fold join of $pt$ and it is called the \textit{$k$-simplex}.
The polytope $I^{\times k}$ is a $k$-fold Cartesian product and called the \textit{$k$-cube}. 

Moreover, a \textit{pyramid} is a polytope of the form $\mathP * pt$ and a \textit{prism} is a polytope of the form $\mathP \times I$. 

An abstract polytope \P is \textit{prime} with respect to $\square$ if there exist no polytopes $\mathP_1$ and $\mathP_2$ such that $\mathP = \mathP_1 \square \mathP_2$. 

According to Hashimoto's Refinement theorem \cite[Theorem 1]{Hashimoto} every connected poset has a unique, up to isomorphism, factorisation into irreducible posets. 
Gleason and Hubard extend this result to abstract polytopes showing that each factor is an abstract polytope itself. 

\begin{thm}[Corollary 5.6 in \cite{GleasonHubard}]
An abstract polytope can be uniquely factorised as a product of abstract polytopes that are prime with respect to the decomposing product. 
\end{thm}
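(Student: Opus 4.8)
The plan is to reduce the statement to Hashimoto's Refinement theorem \cite[Theorem 1]{Hashimoto} by translating the two polytope products into the ordinary direct product of posets. The first step is essentially bookkeeping: one records how $\square$ behaves on underlying posets. By its very definition the join $\mathcal{P} * \mathcal{Q}$ \emph{is} the direct product of the posets $\mathcal{P}$ and $\mathcal{Q}$; only the rank function is shifted. The Cartesian product is the same construction with the bounds readjusted, since deleting the global minimum gives
\begin{align*}
(\mathcal{P} \times \mathcal{Q}) \setminus \{0_{\mathcal{P} \times \mathcal{Q}}\} \;=\; \bigl(\mathcal{P} \setminus \{0_{\mathcal{P}}\}\bigr) \times \bigl(\mathcal{Q} \setminus \{0_{\mathcal{Q}}\}\bigr)
\end{align*}
as posets, the product on the right being the direct product of posets. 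Consequently, after fixing $\square$ and attaching to a polytope $\mathcal{P}$ the poset $\widehat{\mathcal{P}}$ --- namely $\mathcal{P}$ itself if $\square = *$, and $\mathcal{P} \setminus \{0_{\mathcal{P}}\}$ if $\square = \times$ --- a polytope factorisation $\mathcal{P} = \mathcal{P}_1 \square \mathcal{P}_2$ is exactly the same datum as a direct-product factorisation $\widehat{\mathcal{P}} = \widehat{\mathcal{P}_1} \times \widehat{\mathcal{P}_2}$ of posets.

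Next I would note that $\widehat{\mathcal{P}}$ is connected in the sense of Hashimoto --- it retains the maximal element $1_{\mathcal{P}}$, to which every element is comparable, so its comparability graph is connected --- and apply the Refinement theorem to obtain a factorisation $\widehat{\mathcal{P}} \cong Q_1 \times \cdots \times Q_k$ into irreducible posets, unique up to isomorphism and reordering of the factors.

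The substantive step --- the one contributed by Gleason and Hubard --- is to show that each irreducible factor $Q_i$ is, after re-adjoining the bounds that were removed, again an abstract polytope $\mathcal{P}_i$. Only then does $\widehat{\mathcal{P}} \cong Q_1 \times \cdots \times Q_k$ read as a genuine factorisation $\mathcal{P} \cong \mathcal{P}_1 \square \cdots \square \mathcal{P}_k$ of polytopes, the right-hand side being a polytope by Propositions 4.2 and 4.3 in \cite{GleasonHubard}. Here one checks the four defining properties factor by factor, using that the direct product not only preserves but also \emph{reflects} them: a maximal chain of a direct product projects onto maximal chains of the factors whose lengths add up, forcing each $Q_i$ to be graded of finite rank; a rank-two section of the product is a section of a single factor times trivial sections of the others, so the diamond condition descends to each $Q_i$; and strong connectivity of the factors follows in the same manner. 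Finally, by the first step a factor is irreducible as a poset exactly when the corresponding polytope is prime with respect to $\square$, and uniqueness of the polytope factorisation is inherited verbatim from the uniqueness in Hashimoto's theorem.

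I expect the main obstacle to be precisely this last step --- upgrading an irreducible poset factor to an abstract polytope --- which requires showing that the polytope axioms, in particular the diamond condition and strong connectivity, are reflected and not merely preserved by $\square$, together with the careful tracking of minimal and maximal elements when passing between the Cartesian product of polytopes and the direct product of the associated reduced posets.
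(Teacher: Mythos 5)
Your proposal follows essentially the same route as the paper, which states this result without proof as a citation of Corollary 5.6 in \cite{GleasonHubard} and summarises its argument exactly as you do: apply Hashimoto's Refinement theorem \cite[Theorem 1]{Hashimoto} to the underlying poset (the join being the direct product of the full posets, the Cartesian product the direct product after removing the minima), and then show the substantive point, due to Gleason and Hubard, that each irreducible poset factor is again an abstract polytope. Your sketch correctly identifies this last step as the heart of the matter, so there is nothing to add beyond the details already carried out in \cite{GleasonHubard}.
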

Hence every abstract polytope can be written as a unique product of polytopes, where each factor is prime with respect to $\square$. 
In our setting this means that every abstract polytope has two unique factorisations, one into a Cartesian product and one into a join. 

Two polytopes are \textit{relatively prime} with respect to $\square$ if the unique prime factorisations with respect to $\square$ of the polytopes have no prime polytopes in common.

An \textit{automorphism} of a polytope is an order-preserving permutation of its elements.
Duffus proved in Corollary 2 in \cite{Duffus} that an automorphism of a product of relatively prime posets is a product of automorphisms of each factor. 
Gleason and Hubard use this to obtain the following results. 

\begin{prop}\label{AutomProd}
\begin{enumerate}
\item Let \P and \Q be two polytopes that are relatively prime with respect to $\square$. 
Then 
\begin{align*}
Aut(\mathP \square \mathQ) \cong Aut(\mathP) \times Aut(\mathQ).
\end{align*} \label{AutProdRelPrimePoly}
\item Let \Q be a prime polytope with respect to $\square$ and let $\mathP = \mathQ \square \dots \square \mathQ = \mathQ^{\square m}$ be a product. 
Then 
\begin{align*}
Aut(\mathP) \cong  \left( \prod_{i=1}^m Aut(Q) \right) \rtimes Sym(m).
\end{align*} \label{AutProdPrimePoly}
\item Let $\mathP = \mathQ_1^{\square m_1} \square \dots \square \mathQ_r^{\square m_r}$ be an abstract polytope, where the $\mathQ_i$ are pairwise distinct prime polytopes with respect to $\square$.
Then 
\begin{align*}
Aut(\mathP) \cong  \prod_{i=1}^r \left( Aut(\mathQ_i)^{m_i} \rtimes Sym(m_i) \right).
\end{align*}\label{AutProdProdPrimePoly}
\end{enumerate}
\end{prop}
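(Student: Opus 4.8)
The plan is to prove the three statements in order, since (iii) combines (i) and (ii), and (ii) is the only part requiring more than bookkeeping. Throughout I would use the Theorem above (Corollary~5.6 of Gleason and Hubard) and Duffus's Corollary~2.

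For part~(i): I would first record that if $\mathcal{P}$ and $\mathcal{Q}$ are relatively prime with respect to $\square$, then they are relatively prime as posets for the poset product underlying $\square$. This holds because, by the result of Gleason and Hubard, every poset-irreducible factor of a polytope is again a polytope and hence prime with respect to $\square$; so the poset prime factorisation of a polytope coincides with its $\square$-prime factorisation, and disjointness of the $\square$-prime factorisations of $\mathcal{P}$ and $\mathcal{Q}$ forces disjointness of their poset prime factorisations. Duffus's Corollary~2 then applies verbatim: every automorphism of $\mathcal{P}\square\mathcal{Q}$ has the form $(F,G)\mapsto(\varphi(F),\psi(G))$ for uniquely determined $\varphi\in Aut(\mathcal{P})$ and $\psi\in Aut(\mathcal{Q})$, and conversely every such pair is an automorphism; this bijective correspondence is a group isomorphism. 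The only point to check is that, for each $\square\in\{*,\times\}$, the poset product appearing in Duffus's theorem is the one underlying $\square$ in the framework of Gleason and Hubard; for the join this is immediate, as $*$ is by definition the componentwise-ordered product.

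For part~(ii): write $\mathcal{P}=\mathcal{Q}^{\square m}$ with $\mathcal{Q}$ prime. I would define $\pi\colon Aut(\mathcal{P})\to Sym(m)$ to be the map recording the permutation that an automorphism induces on the $m$ prime factors of $\mathcal{P}$. The key step — and the main obstacle — is to show $\pi$ is well defined, i.e.\ that an automorphism of $\mathcal{P}$ carries the family of direct factors of $\mathcal{P}$ to itself and merely permutes it. This is where the uniqueness of the prime factorisation and the structural content of Hashimoto's and Duffus's theorems enter: the factors of $\varphi(\mathcal{P})$ coincide with those of $\mathcal{P}$, all isomorphic to $\mathcal{Q}$, so $\varphi$ pairs them up in some order. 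Granting this, $\pi$ is a homomorphism; it is surjective because every $\sigma\in Sym(m)$ is realised by the coordinate permutation $(F_1,\dots,F_m)\mapsto(F_{\sigma^{-1}(1)},\dots,F_{\sigma^{-1}(m)})$, an automorphism of $\mathcal{P}$, and $\sigma\mapsto$ this automorphism is a group-theoretic section of $\pi$. Its kernel is the set of automorphisms fixing every direct factor; such a $\varphi$ is determined by its restrictions $\varphi_i\in Aut(\mathcal{Q})$ to the $i$-th factor, and conversely every tuple $(\varphi_1,\dots,\varphi_m)$ arises, so $\ker\pi\cong\prod_{i=1}^m Aut(\mathcal{Q})$. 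The split extension, with $Sym(m)$ acting through the section by permuting the coordinates of $\prod_{i=1}^m Aut(\mathcal{Q})$, gives $Aut(\mathcal{P})\cong\bigl(\prod_{i=1}^m Aut(\mathcal{Q})\bigr)\rtimes Sym(m)$, namely the wreath product $Aut(\mathcal{Q})\wr Sym(m)$.

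For part~(iii): group equal prime factors and write $\mathcal{P}=\mathcal{Q}_1^{\square m_1}\square\cdots\square\mathcal{Q}_r^{\square m_r}$, so that the blocks $\mathcal{Q}_i^{\square m_i}$ are pairwise relatively prime with respect to $\square$, their prime factorisations being disjoint because the $\mathcal{Q}_i$ are pairwise distinct. Applying part~(i) a total of $r-1$ times yields $Aut(\mathcal{P})\cong\prod_{i=1}^r Aut(\mathcal{Q}_i^{\square m_i})$, and applying part~(ii) to each block gives $Aut(\mathcal{Q}_i^{\square m_i})\cong Aut(\mathcal{Q}_i)^{m_i}\rtimes Sym(m_i)$; substituting produces the claimed formula. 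I expect parts~(i) and~(iii) to be purely formal, so the whole argument hinges on the well-definedness of $\pi$ in part~(ii) — the statement that an automorphism respects the prime factorisation up to permutation of isomorphic factors.
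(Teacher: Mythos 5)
The paper does not argue this proposition at all: its proof is the single line citing Propositions 7.1, 7.2 and 7.3 of Gleason and Hubard, so your text is an attempted reconstruction of the proofs of the cited results and has to stand on its own. There it has a genuine gap, which you yourself flag: the well-definedness of $\pi$ in part (ii). That every automorphism of $\mathcal{Q}^{\square m}$ carries the chosen coordinate decomposition into itself up to a permutation of the factors is the entire content of the wreath-product statement, and it does not follow from uniqueness of the prime factorisation up to isomorphism. Uniqueness compares two decompositions of the same polytope as abstract data (multisets of isomorphism classes of factors); here $\varphi(\mathcal{P})=\mathcal{P}$ as a poset, and what is needed is the stronger, ``strict'' refinement property --- that the automorphism itself factors coordinatewise after a permutation of isomorphic factors --- which is exactly what Hashimoto's refinement machinery and Duffus's analysis of automorphisms of products supply, and what Gleason--Hubard's Proposition 7.2 encapsulates. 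Writing ``granting this'' grants the theorem; as it stands, part (ii) restates the claim rather than proving it.

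A second, smaller gap: to invoke Duffus (or Hashimoto) for $\square=\times$ at all, you must first exhibit the Cartesian product as a direct product of posets, and it is not the componentwise product --- the pairs with exactly one coordinate minimal are missing from $\mathcal{P}\times\mathcal{Q}$. The standard repair is to delete the minimum: $(\mathcal{P}\times\mathcal{Q})\setminus\{(0_{\mathcal{P}},0_{\mathcal{Q}})\}$ is the componentwise product of $\mathcal{P}\setminus\{0_{\mathcal{P}}\}$ and $\mathcal{Q}\setminus\{0_{\mathcal{Q}}\}$, removing the unique minimum does not change the automorphism group, and one checks the truncated posets are connected so that the poset theorems apply. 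You note that only the join case is ``immediate'' and then leave the Cartesian case --- the one that actually requires an argument, and through which your relative-primeness translation in part (i) must also be routed --- unaddressed. Granted those two points, parts (i) and (iii) are fine as formal reductions, and (iii) follows from (i) and (ii) exactly as you say.
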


\begin{proof}
These are Propositions 7.1, 7.2, and 7.3 in \cite{GleasonHubard}.
\end{proof}

\begin{ex} 
We compute the automorphism groups of the polytopes of rank $0$ and $1$ using Proposition \ref{AutomProd}.

The polytope $pt$ is prime with respect to the join. 
Hence, by Proposition \ref{AutomProd}\ref{AutProdPrimePoly} it follows that
\begin{align*}
Aut(pt^{* k}) \cong \left( \prod_{i=1}^k Aut(pt) \right) \rtimes Sym(k) \cong Sym(k).
\end{align*}

The polytope $I$ is prime with respect to the Cartesian product and its automorphism group is isomorphic to $\mathbb{Z}/2\mathbb{Z}$.
For the automorphism group of the $k$-cube $I^{\times k}$ it follows again by Proposition \ref{AutomProd}\ref{AutProdPrimePoly} that
\begin{align*}
Aut(I^{\times k}) \cong \left( \prod_{i=1}^k Aut(I) \right) \rtimes Sym(k) \cong (\mathbb{Z}/2\mathbb{Z})^k \rtimes Sym(k). 
\end{align*}
\end{ex}
 
Using Proposition \ref{AutomProd} we observe that the automorphism group of a join of a polytope with the $k$-simplex splits as a direct product, if the polytope is relatively prime to $pt$. 
Similar, the same holds for the Cartesian product of a polytope with the $k$-cube, if the polytope is relatively prime to $I$. 

\begin{prop}  \label{AutNoPyrAndAutNoPrism}
Let \P be an abstract polytope.
If \P is not a pyramid, then 
\begin{align*}
Aut(\mathP * pt^{* k}) \cong Aut(\mathP)  \times Sym(k)
\end{align*}
for every $k \geq 1$, 
and if \P is not a prism then 
\begin{align*}
Aut(\mathP \times I^{\times k}) \cong Aut(\mathP) \times \left( (\mathbb{Z}/2\mathbb{Z})^k \rtimes Sym(k) \right)
\end{align*}
for every $k \geq 1$. 
\end{prop}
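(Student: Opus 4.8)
The plan is to deduce both isomorphisms from Proposition~\ref{AutomProd}\ref{AutProdRelPrimePoly}, after checking that the hypothesis ``\P is not a pyramid'' forces \P and $pt^{*k}$ to be relatively prime with respect to $*$, and that ``\P is not a prism'' forces \P and $I^{\times k}$ to be relatively prime with respect to $\times$. This reduction is cheap. The polytope $pt^{*k}$ is a join of $k$ copies of the $*$-prime polytope $pt$, so by uniqueness of prime factorisations this is its prime factorisation with respect to $*$ and $pt$ is its only prime factor; symmetrically $I$ is the only prime factor of $I^{\times k}$ with respect to $\times$. By the definition of relative primality, \P and $pt^{*k}$ are relatively prime with respect to $*$ exactly when $pt$ does not occur in the prime factorisation of \P with respect to $*$, and analogously for the pair \P and $I^{\times k}$ with respect to $\times$. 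Granting the claim of the next paragraph, Proposition~\ref{AutomProd}\ref{AutProdRelPrimePoly} gives
\begin{align*}
Aut(\mathP * pt^{*k}) \cong Aut(\mathP) \times Aut(pt^{*k}) \qquad\text{and}\qquad Aut(\mathP \times I^{\times k}) \cong Aut(\mathP) \times Aut(I^{\times k}),
\end{align*}
and inserting $Aut(pt^{*k}) \cong Sym(k)$ and $Aut(I^{\times k}) \cong (\mathbb{Z}/2\mathbb{Z})^k \rtimes Sym(k)$ from the Example above produces the two displayed formulas.

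The real content is the claim that if \P is not a pyramid then $pt$ does not occur in the prime factorisation of \P with respect to $*$, and the analogous statement for prisms and $I$. I would argue by contraposition. Suppose $\mathP \cong \mathQ_1^{*m_1} * \dots * \mathQ_r^{*m_r}$ is the prime factorisation of \P with respect to $*$, with the $\mathQ_i$ pairwise distinct $*$-prime polytopes and $m_i \geq 1$, and suppose $pt$ occurs among the $\mathQ_i$, say $\mathQ_1 \cong pt$. Set
\begin{align*}
\mathQ := pt^{*(m_1-1)} * \mathQ_2^{*m_2} * \dots * \mathQ_r^{*m_r},
\end{align*}
which is again an abstract polytope, since a finite join of abstract polytopes is one by the results of Gleason and Hubard recalled above. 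Then $\mathP \cong \mathQ * pt$, so \P is a pyramid. The prism statement is proved identically, with $*$ replaced by $\times$ and $pt$ by $I$.

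The one point that needs care is the degenerate case in which the product defining $\mathQ$ is empty, that is, when \P is itself a power of $pt$ (respectively a power of $I$); the smallest instance is $\mathP \cong pt$ (respectively $\mathP \cong I$). There one must read the empty join as the rank $-1$ polytope and the empty Cartesian product as $pt$, so that $pt$ is genuinely the pyramid over the rank $-1$ polytope and $I \cong pt \times I$ is genuinely the prism over $pt$; this agrees with the conventions of \cite{McMullenSchulte} that we follow, and it means that $pt$ and $I$ are already covered by the hypotheses instead of being exceptions, so no separate argument is needed. Apart from this bookkeeping the proof is a purely formal consequence of the uniqueness of prime factorisation, the primality of $pt$ and $I$, and Proposition~\ref{AutomProd}\ref{AutProdRelPrimePoly}, so I expect this degenerate case to be the only obstacle.
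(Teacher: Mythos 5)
Your proposal is correct and follows essentially the same route as the paper: deduce from ``not a pyramid'' (resp. ``not a prism'') that $pt$ (resp. $I$) cannot occur in the unique prime factorisation of $\mathcal{P}$, conclude that $\mathcal{P}$ and $pt^{*k}$ (resp. $I^{\times k}$) are relatively prime, and apply Proposition~\ref{AutomProd}\ref{AutProdRelPrimePoly} together with the automorphism groups of the simplex and the cube computed in the Example. Your explicit contraposition argument (splitting one copy of $pt$ off the factorisation) and your remark that $pt$ and $I$ must themselves be counted as a pyramid and a prism are just a more careful spelling-out of the step the paper states in one line.
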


\begin{proof}
If \P is not a pyramid then it cannot be written as a join of an abstract polytope with $pt$. 
Hence, in the unique prime factorisation of \P with respect to the join the polytope $pt$ does not occur and thus \P and $pt$ are relatively prime. 
By Proposition \ref{AutomProd}\ref{AutProdRelPrimePoly} the automorphism group splits as a direct product of $Aut(\mathP)$ and $Aut(pt^{* k})$. 

Similar, if \P is not a prism, then in the unique prime factorisation of \P with respect to the Cartesian product the polytope $I$ does not occur. 
Hence, \P and $I$ are relatively prime and again by Proposition \ref{AutomProd}\ref{AutProdRelPrimePoly} the automorphism group splits as a direct product of $Aut(\mathP)$ and $Aut(I^{\times k})$. 
\end{proof}

If we know how the polytope splits and in addition if the polytope is not a prism or a pyramid, then we can decompose the automorphism group. 
We start by observing that a polytope of rank at least $2$ cannot be prism and pyramid at the same time.
For this we translate the property of being a pyramid in terms of abstract polytopes.

\begin{lem} \label{BeingPyramide}
Let \P be an abstract polytope. 
If \P is a pyramid, then there exists a vertex $v_0 \in \mathP$ such that for every vertex $v \in \mathP$ different from $v_0$, exists an edge $e \in \mathP$ incident with $v_0$ and $v$.
\end{lem}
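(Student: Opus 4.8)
The plan is to work directly from the definition of the join. Since $pt$ is the unique polytope of rank $0$, it consists of exactly two elements $0_{pt} < 1_{pt}$ with $rank(0_{pt}) = -1$ and $rank(1_{pt}) = 0$. If $\mathcal{P}$ is a pyramid, write $\mathcal{P} = \mathcal{Q} * pt$ for an abstract polytope $\mathcal{Q}$; then the elements of $\mathcal{P}$ are precisely the pairs $(F, 0_{pt})$ and $(F, 1_{pt})$ with $F \in \mathcal{Q}$, and by the rank formula for the join
\[
rank_{\mathcal{P}}(F, 0_{pt}) = rank_{\mathcal{Q}}(F), \qquad rank_{\mathcal{P}}(F, 1_{pt}) = rank_{\mathcal{Q}}(F) + 1 .
\]

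First I would read off the vertices of $\mathcal{P}$ from these two formulas. An element $(F, 0_{pt})$ is a vertex exactly when $F$ is a vertex of $\mathcal{Q}$, whereas an element $(F, 1_{pt})$ is a vertex exactly when $rank_{\mathcal{Q}}(F) = -1$, i.e. $F = 0_{\mathcal{Q}}$. Hence $\mathcal{P}$ has a distinguished vertex $v_0 := (0_{\mathcal{Q}}, 1_{pt})$ — the apex of the pyramid — and every vertex of $\mathcal{P}$ other than $v_0$ has the form $v = (F, 0_{pt})$ for some vertex $F$ of $\mathcal{Q}$.

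Next, given such a $v = (F, 0_{pt})$, I would exhibit the required edge explicitly: put $e := (F, 1_{pt}) \in \mathcal{P}$. By the rank formula $rank_{\mathcal{P}}(e) = rank_{\mathcal{Q}}(F) + 1 = 1$, so $e$ is an edge. Using the componentwise order on the join together with $0_{\mathcal{Q}} \leq_{\mathcal{Q}} F$ and $0_{pt} \leq_{pt} 1_{pt}$, one gets $v_0 = (0_{\mathcal{Q}}, 1_{pt}) \leq_{\mathcal{P}} (F, 1_{pt}) = e$ and $v = (F, 0_{pt}) \leq_{\mathcal{P}} (F, 1_{pt}) = e$; since $e$ has rank $1$ while $v_0$ and $v$ have rank $0$, both relations are strict, so $e$ is incident with $v_0$ and with $v$, as required.

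I do not expect a serious obstacle: the statement falls out of the explicit description of the join once the vertices of $\mathcal{P}$ have been identified. The only point needing a little care is the bookkeeping with the rank function — checking that $v_0$ is the unique vertex of $\mathcal{P}$ coming from the $pt$-factor, and that $(F, 1_{pt})$ genuinely lies in $\mathcal{Q} * pt$ and has rank $1$.
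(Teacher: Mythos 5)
Your proposal is correct and follows essentially the same argument as the paper: identify the vertices of $\mathcal{Q} * pt$ via the rank formula, take the apex $v_0 = (0_{\mathcal{Q}}, 1_{pt})$, and for $v = (F, 0_{pt})$ exhibit the edge $(F, 1_{pt})$ incident to both. Your version merely spells out the rank and incidence checks in slightly more detail than the paper does.
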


\begin{proof}
Let \Q be a polytope with $\mathP = \mathQ * pt$. 
Then the vertices of \P are of the form $(0_{\mathQ}, 1_{pt})$ and $(w,0_{pt})$ for every vertex $w \in \mathQ$. 

Define $v_0 = (0_{\mathQ}, 1_{pt})$ and let $v \in \mathP$ be a vertex different from $v_0$. 
Hence, there exists a vertex $w \in \mathQ$ such that $v=(w,0_{pt})$. 
Then $e=(w,1_{pt})$ is an edge in \P and incident to $v_0$ and $v$.  
\end{proof}

We use this to prove that every prism is not a pyramid and vice versa. 
\begin{prop} \label{PrismNoPyrAndPyrNoPrism}
Let \P be an abstract polytope of rank at least 1. 
Then the prism $\mathP \times I$ is not a pyramid and the pyramid $\mathP * pt$ is not prism. 
\end{prop}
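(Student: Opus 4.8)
The plan is to reduce both assertions to a single structural fact about prisms and then invoke Lemma \ref{BeingPyramide}. Call a vertex $v_0$ of an abstract polytope \emph{universal} if for every other vertex $v$ there is an edge incident with both $v_0$ and $v$; Lemma \ref{BeingPyramide} says that every pyramid has a universal vertex. The heart of the argument is the following claim: if $\mathcal{Q}$ is an abstract polytope of rank at least $1$, then the prism $\mathcal{Q} \times I$ has no universal vertex.

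To prove this claim I would first read off the rank-$0$ and rank-$1$ elements of $\mathcal{Q} \times I$ directly from the definition of the Cartesian product (the extra minimal element $(0_{\mathcal{Q}},0_I)$ has rank $-1$ and is irrelevant here). Writing $g_0, g_1$ for the two vertices of $I$ and $1_I$ for its edge, the vertices of $\mathcal{Q}\times I$ are the pairs $(w,g_i)$ with $w$ a vertex of $\mathcal{Q}$ and $i\in\{0,1\}$, and its edges are the pairs $(f,g_i)$ with $f$ an edge of $\mathcal{Q}$, together with the pairs $(w,1_I)$ with $w$ a vertex of $\mathcal{Q}$. From the product order one then checks that the vertices sharing an edge with a fixed vertex $(w_0,g_i)$ are exactly $(w_0,g_{1-i})$, coming from $(w_0,1_I)$, and the vertices $(w',g_i)$ where $w'$ is the second endpoint of an edge of $\mathcal{Q}$ at $w_0$, coming from $(f,g_i)$. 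In particular $(w_0,g_i)$ shares no edge with $(w',g_{1-i})$ whenever $w'\neq w_0$: a common edge $(F,G)$ would force $G\geq g_0,g_1$, hence $G=1_I$ and $rank(F)=0$, whereas $F\geq w_0$ and $F\geq w'$ for distinct vertices $w_0\neq w'$ is impossible. Since $\mathcal{Q}$ has rank at least $1$, it has at least two vertices (it has an edge, and the diamond condition applied to $0_{\mathcal{Q}}$ and that edge produces exactly two vertices below it), so such a $w'\neq w_0$ exists and $(w_0,g_i)$ is not universal; as $(w_0,g_i)$ was arbitrary, the claim follows.

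Granting the claim, both statements drop out. If the prism $\mathcal{P}\times I$ with $rank(\mathcal{P})\geq 1$ were a pyramid, Lemma \ref{BeingPyramide} would give it a universal vertex, contradicting the claim with $\mathcal{Q}=\mathcal{P}$. If the pyramid $\mathcal{P}* pt$ with $rank(\mathcal{P})\geq 1$ were a prism $\mathcal{Q}\times I$, then comparing ranks gives $rank(\mathcal{Q})+1 = rank(\mathcal{P})+1\geq 2$, so $rank(\mathcal{Q})\geq 1$ and the claim says $\mathcal{Q}\times I$ has no universal vertex; but $\mathcal{P}* pt$ is a pyramid, so Lemma \ref{BeingPyramide} gives it one — a contradiction.

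I expect the only delicate part to be the combinatorial bookkeeping inside the claim: correctly listing all rank-$0$ and rank-$1$ elements of the Cartesian product and determining from the product order precisely which pairs of vertices lie below a common edge. Everything else is a short rank count together with two applications of Lemma \ref{BeingPyramide}.
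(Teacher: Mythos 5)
Your proposal is correct and follows essentially the same route as the paper: you show that in $\mathcal{Q}\times I$ a vertex $(w_0,g_i)$ and a vertex $(w',g_{1-i})$ with $w'\neq w_0$ (which exists by the diamond condition) cannot lie below a common edge, so no vertex is universal and Lemma \ref{BeingPyramide} rules out the pyramid structure, and then you deduce the second assertion by the same rank comparison and contradiction as the paper. The only cosmetic difference is that you explicitly enumerate the rank-$0$ and rank-$1$ elements of the product, which the paper leaves implicit.
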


\begin{proof}
The polytope $I$ has exactly two vertices $x_1$ and $x_2$. 
Similarly as above, we conclude that the vertices of $P \times I$ are of the form $(v,x_i)$ for $i=1,2$ and every vertex $v \in \mathP$. 

Let $w$ be a vertex in $\mathP \times I$.
Then there exists a vertex $v_1$ in $\mathP$ such that $w=(v_1,x_j)$ where $j \in \{1,2\}$. 
Without loss of generality, assume that $j=1$ and hence $w=(v_1,x_1)$. 
By the diamond condition there exists another vertex $v_2 \in \mathP$ different from $v_1$. 
Let $(F,G)$ in $\mathP \times I$ be incident to $w$ and $(v_2,x_2)$, then $F$ is incident to $v_1$ and $v_2$ and hence has at least rank $1$. 
The same holds for $G$ which has also at least rank $1$ and thus $G=1_I$. 
By the definition of the rank function for the Cartesian product, it follows that $(F,G)$ has at least rank $2$. 
So $w$ and $(v_2,x_2)$ cannot be incident to an edge in $\mathP \times I$. 
Since $w$ is an arbitrary vertex, it follows by Lemma \ref{BeingPyramide} that $\mathP \times I$ is not a pyramid. 

Assuming that $\mathP * pt$ is a prism, there exists an abstract polytope $Q$ of the same rank as \P such that $\mathP * pt \cong \mathQ \times I$. 
But by the above $\mathQ \times I$ is not a pyramid and hence $\mathP * pt$ is not a prism. 
\end{proof}
It follows that every prism is prime with respect to the join and every pyramid is prime with respect to the Cartesian product.

\section{Automorphism groups of the inductively constructed polytopes}
Now we can describe the automorphism groups of the polytopes constructed by the inductive process pictured in Figure \ref{GenProcess}. 

Let \P be an abstract polytope constructed by the process in Figure \ref{GenProcess}. 
Then the last step was either a join or a Cartesian product.
\begin{enumerate}
\item[] First, assume that \P is a prism, i.e. the last step was a Cartesian product. 
\begin{enumerate}
\item If $\mathP = I^{\times k}$ for some $k \geq 1$, then
\begin{align*}
Aut(\mathP) \cong Aut(I^{\times k}) \cong (\mathbb{Z}/2\mathbb{Z})^k \rtimes Sym(k). 
\end{align*}
\item If there exists a pyramid \Q of rank at least $2$ in the construction process and $k \geq 1$ such that $\mathP = \mathQ \times I^{\times k}$, then \Q is by Proposition \ref{PrismNoPyrAndPyrNoPrism} not a prism and hence by Proposition \ref{AutNoPyrAndAutNoPrism}, we get 
\begin{align*}
Aut(\mathP) \cong Aut(\mathQ \times I^{\times k}) \cong Aut(\mathQ) \times \left( (\mathbb{Z}/2\mathbb{Z})^k \rtimes Sym(k)\right).
\end{align*}
\end{enumerate}
\item[] Second, assume that \P is a pyramid.
\begin{enumerate}
\item If $\mathP = pt^{* k}$ for some $k \geq 1$, then 
\begin{align*}
Aut(\mathP) \cong Aut(pt^{* k}) \cong Sym(k).
\end{align*} 
\item If there exists a prism \Q of rank at least 2 and $k \geq 1$ such that $\mathP = \mathQ * pt^{* k}$, then \Q is by Proposition \ref{PrismNoPyrAndPyrNoPrism} not a pyramid and hence with Proposition \ref{AutNoPyrAndAutNoPrism}, it follows that
\begin{align*}
Aut(\mathP) \cong Aut(\mathQ * pt^{* k}) \cong Aut(\mathQ) \times Sym(k). 
\end{align*}
\end{enumerate}
\end{enumerate}

Furthermore, we can describe a generating set for the subsequent polytopes of an abstract polytope \P of rank at least 2 in the construction process. 
Again we need to distinguish whether \P is a prism or a pyramid. 

Assume first that \P is a prism, i.e. the last step in the construction of \P was a Cartesian product. 
\begin{enumerate}
\item[($\times$)] Viewing the vertices of $I^{\times k}$ as vectors in $(\mathbb{Z}/2\mathbb{Z})^k$, the following permutations 
\begin{align*}
\sigma,s_i \colon (\mathbb{Z}/2\mathbb{Z})^k \to (\mathbb{Z}/2\mathbb{Z})^k
\end{align*}
for all $i=1, \dots, k-1$ form a generating set of $Aut(I^{\times k})$, where
\begin{align*}
\sigma( \left(z_1,z_2,  \dots, z_k\right) ) &= \left(z_1 +1, z_2, \dots, z_k\right) \\
s_i(\left(z_1, z_2, \dots, z_k\right)) &= \left(z_{\tau_i(1)},z_{\tau_i(2)},  \dots, z_{\tau_i(k)}\right)
\end{align*}
for every $i=1,\dots, k-1$, and where $\tau_i =(i~ i+1) \in Sym(k)$ is the transposition swapping $i$ and $i+1$. 

Hence, adding the permutation swapping the last two entries of the vectors to a generating set of $Aut(\mathP)$ gives a generating set of the automorphism group of $\mathP \times I$. 
\item[($*$)] Since \P is a prism, it is not a pyramid and thus $Aut(\mathP * pt) \cong Aut(\mathP)$. 

Hence, the generating set of $Aut(\mathP)$ is also a generating set of $Aut(\mathP * pt)$. 
\end{enumerate}
Assume now that \P is a pyramid.
\begin{enumerate}
\item[($\times$)] As \P is a pyramid, it is not a prism and thus $Aut(\mathP \times I) \cong Aut(\mathP) \times \mathbb{Z}/2\mathbb{Z}$. 

Hence, adding a permutation swapping one copy of \P with the other copy of \P to a generating set of $Aut(\mathP)$ gives a generating set of $Aut(\mathP \times I)$. 
\item[($*$)] The symmetric group $Sym(k)$ is generated by the transpositions $\tau_i = ( i~ i+1)$ for every $i=1, \dots, k-1$. 
Adding the transposition $\tau_k$ we get a generating set for $Sym(k+1)$. 

Hence, adding the permutation swapping the last two added copies of $pt$ to a generating set of $Aut(\mathP)$ leads to a generating set of $Aut(\mathP * pt)$. 
\end{enumerate}

\begin{ex}
We compute the automorphism group of $\mathP = \left( \left( I*pt\right) \times I^{\times 3} \right) * pt^{* 2}$. 
Tracing back we find $\mathQ_1 = \left( I * pt \right) \times I^{\times 3}$, which is a prism and hence not a pyramid.
So the unique factorisation of \P into prime polytopes with respect to the join is $\mathP = \mathQ_1 * pt^{* 2}$.
But we do not know the automorphism group of $\mathQ_1$ and thus we need to repeat the process.  
Again, tracing back leads to $\mathQ_2 = I*pt$ which is a pyramid and not a prism. 
Furthermore, the unique factorisation of $\mathQ_1$ into prime polytopes with respect to the Cartesian product is $\mathQ_1 = \mathQ_2 \times I^{\times 3}$. 
Moreover, since $I = pt * pt$, the automorphism group $Aut(\mathQ_2)$ is isomorphic to $Sym(3)$. 

By Proposition \ref{AutNoPyrAndAutNoPrism} we can compute the automorphism group of $\mathQ_1$.
\begin{align*}
Aut(\mathQ_1) \cong Aut(Q_2) \times Aut(I^{\times 3}) \cong Sym(3) \times \left( (\mathbb{Z}/2\mathbb{Z})^3 \rtimes Sym(3) \right)
\end{align*}
Using this, we can finally compute with Proposition \ref{AutNoPyrAndAutNoPrism} the automorphism group of $\mathP$.
\begin{align*}
Aut(\mathP) \cong Aut(Q_1) \times Aut(pt^{* 2}) \cong \left( Sym(3) \times \left( (\mathbb{Z}/2\mathbb{Z})^3 \rtimes Sym(3) \right) \right) \times Sym(2)
\end{align*} 
The automorphism groups of the subsequent polytopes of \P are then 
\begin{align*}
Aut(\mathP \times I) &\cong \left( \left( Sym(3) \times \left( (\mathbb{Z}/2\mathbb{Z})^3 \rtimes Sym(3) \right) \right) \times Sym(2) \right) \times \mathbb{Z}/2\mathbb{Z} \\
Aut(\mathP * pt) &\cong \left( Sym(3) \times \left( (\mathbb{Z}/2\mathbb{Z})^3 \rtimes Sym(3) \right) \right) \times Sym(3).
\end{align*} 
\end{ex}

\section{An algorithm to compute the automorphism groups}
We now give an algorithm attaching to each polytope three data points. 
Let $A(\mathP)$ be a variable to save the automorphism group of the last polytope constructed via the other product than $\mathP$, i.e. if \P is constructed via a Cartesian product (join), then the automorphism group of the last polytope constructed via a join (Cartesian product) is saved. 
Let $k(\mathP)$ be an integer and $prod(\mathP)$ a boolean to save which product was used to construct $\mathP$. 
The value $cartesian$ stands for the Cartesian product and $join$ for the join. 

At the beginning, we need to define the data attached to $I$ and $I*pt$. 
Start with $I$ and $I*pt$ and define 
\begin{align*}
&A(I)= null, k(I)=1, prod(I)=cartesian, \\
&A(I*pt)= null, k(I*pt)=3, prod(I*pt) = join. 
\end{align*}

Now we can attach the data to the remaining polytopes inductively.
For this let \P be an already constructed polytope and let $null \times A = A$. 
Then set the following variables for the subsequent polytopes of $\mathP$. 
\begin{enumerate}
\item[$\mathP \times I:$] If $prod(\mathP)=cartesian$, then
\begin{align*}
&A(\mathP \times I) = A(P) \\
&k(\mathP \times I)= k(\mathP) +1 \\ 
&prod(\mathP \times I) = cartesian,
\intertext{else}
&A(\mathP \times I) = A(P) \times Sym(k(\mathP)) \\
&k(\mathP \times I)= 1 \\ 
&prod(\mathP \times I) = cartesian. 
\end{align*}
\item[$\mathP * pt:$] If $prod(\mathP)=cartesian$, then
\begin{align*}
&A(\mathP *pt) = A(P) \times \left( (\mathbb{Z}/2\mathbb{Z})^{k(\mathP)} \rtimes Sym(k(\mathP)) \right) \\
&k(\mathP * pt)= 1 \\ 
&prod(\mathP * pt) = join,
\intertext{else}
&A(\mathP * pt) = A(P) \\
&k(\mathP * pt)= k(\mathP) + 1 \\
&prod(\mathP * pt) = join. 
\end{align*}
\end{enumerate}

With this data the automorphism group of a polytope \P constructed via this process can be computed as follows. 

If $prod(\mathP)=cartesian$, then
\begin{align*}
Aut(\mathP) &\cong A(P) \times \left( (\mathbb{Z}/2\mathbb{Z})^{k(P)} \rtimes Sym(k(\mathP)) \right)
\intertext{else, i.e. if $prod(\mathP)=join$, then } 
Aut(\mathP) &\cong A(P) \times Sym(k(\mathP)).
\end{align*}

\subsection*{Acknowledgments}
I would like to thank my advisor Linus Kramer for many helpful comments and remarks, and for the encouragement to write this article.


\end{document}